\newtheorem{theorem}{Theorem}[section]
\newtheorem{lemma}[theorem]{Lemma}
\newtheorem{corollary}[theorem]{Corollary}
\theoremstyle{definition}
\newcommand{\E}{\mathop{\bf E\/}}
\title {A Simple Proof of the Cayley Formula using Random Graphs}
\author[S. Wu]{Scott Wu}
\author[R. Li]{Ray Li}
\author[A. He]{Andrew He}
\author[S. Hao]{Steven Hao}
\begin{document}

\begin{abstract}
We present a nice result on the probability of a cycle occuring in a randomly generated graph. We then provide some extensions and applications, including the proof of the famous Cayley formula, which states that the number of labeled trees on $n$ vertices is $n^{n-2}.$ \cite{Cayley} 
\end{abstract}

\maketitle

\section {Proof of Cayley's Theorem}

\begin {lemma}
\label{keylemma}
Let $G$ be a graph with $n$ vertices, initially empty, and let $p \in [0,1]$. For each vertex $v \in G$, indepedently perform the following process: Pick a vertex $w$ uniformly at random from the vertices of $G$. With probability $p$, call $v$ \textit{good} and create a directed edge from $v$ to $w$.
The probability that $G$ contains a directed cycle of any kind, including self-loops, is $p$.
\end {lemma}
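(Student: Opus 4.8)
The plan is to evaluate the probability by inclusion--exclusion over all possible directed cycles, using crucially that each vertex has out-degree at most one.

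First I would fix a clean probability space: for each vertex $v$ let $X_v$ be the indicator of the event ``$v$ is good'' (so $\Pr[X_v=1]=p$) and let $W_v$ be its uniformly random target in $\{1,\dots,n\}$, with all $2n$ of these variables independent. A directed cycle is a cyclically ordered tuple $C=(v_1\to v_2\to\dots\to v_\ell\to v_1)$ of distinct vertices, the case $\ell=1$ being a self-loop, and $C$ is present in $G$ exactly when each $v_i$ is good and $W_{v_i}=v_{i+1}$; since this involves $\ell$ disjoint blocks of independent randomness, $\Pr[C\text{ present}]=(p/n)^{\ell}$.

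Next I would run inclusion--exclusion on $\Pr\!\left[\bigcup_C A_C\right]$, where $A_C$ is the event ``$C$ is present''. The structural point is that a collection $I$ of cycles can occur simultaneously only if the cycles in $I$ are pairwise vertex-disjoint (a vertex on two cycles would need two out-edges), and that a pairwise vertex-disjoint family of directed cycles with vertex set $U\subseteq\{1,\dots,n\}$ is precisely a permutation $\sigma$ of $U$, its cycles being the cycles of $\sigma$ and $|I|=c(\sigma)$, the number of cycles of $\sigma$. By independence $\Pr\!\left[\bigcap_{C\in I}A_C\right]=(p/n)^{|U|}$ for such a family, so
\[
\Pr[G\text{ has a directed cycle}]=\sum_{\emptyset\neq U\subseteq\{1,\dots,n\}}\ \sum_{\sigma\in\mathrm{Sym}(U)}(-1)^{c(\sigma)+1}\left(\frac pn\right)^{|U|}=\sum_{m=1}^{n}\binom nm\left(\frac pn\right)^{m}\left(-\sum_{\sigma\in S_m}(-1)^{c(\sigma)}\right).
\]
I would finish with the standard identity $\sum_{\sigma\in S_m}(-1)^{c(\sigma)}=-1$ for $m=1$ and $=0$ for $m\ge 2$: indeed $\mathrm{sgn}(\sigma)=(-1)^{m-c(\sigma)}$, so the sum equals $(-1)^m\sum_{\sigma\in S_m}\mathrm{sgn}(\sigma)$, which vanishes for $m\ge2$ because $S_m$ has equally many even and odd permutations (and is $-1$ directly for $m=1$). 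Hence only the $m=1$ term survives, leaving $\binom n1\cdot\frac pn=p$.

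The main obstacle is the inclusion--exclusion bookkeeping: carefully arguing that every non-vertex-disjoint intersection has probability zero, recognizing disjoint cycle families as permutations of subsets (which is what makes self-loops, i.e.\ $1$-cycles, fit in with no special casing), and regrouping the alternating sum by the support size $m$. A slightly tidier packaging I might use instead is to condition first on the set $S$ of good vertices: conditionally the edges form a uniformly random function $S\to\{1,\dots,n\}$, the identical inclusion--exclusion (with $1/n$ replacing $p/n$) gives conditional cycle probability $|S|/n$, and then $\Pr[G\text{ has a cycle}]=\E[|S|]/n=np/n=p$.
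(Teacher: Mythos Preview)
Your proof is correct and takes a genuinely different route from the paper. The paper argues by strong induction on $n$: conditioning on which $k$ vertices are good, it observes that any cycle lies entirely among them and that the induced process on those $k$ vertices is again an instance of the lemma with parameter $k/n$; the inductive hypothesis gives conditional cycle probability $k/n$, and averaging yields $\E[k]/n=p$. You instead unroll inclusion--exclusion directly, using the out-degree-$\le 1$ constraint to identify each nonzero term with a permutation of a subset, and then collapsing the alternating sum via $\sum_{\sigma\in S_m}\mathrm{sgn}(\sigma)=0$ for $m\ge 2$. The paper's argument is more conceptual, exploiting the self-similarity of the model with no algebraic identities; yours is a clean one-shot computation with no induction, and the appearance of the permutation sign sum is a pleasant explanation of why only single vertices contribute. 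Both approaches extend without change to the paper's two generalizations (vertex-dependent $p_v$ and a non-uniform target distribution $\pi$), so neither is strictly stronger here. Your closing ``tidier packaging'' is essentially the paper's proof, except that where you would rerun inclusion--exclusion to get the conditional probability $|S|/n$, the paper invokes the inductive hypothesis instead.
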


\begin {proof}
We proceed by strong induction on $n$. If $n=1$, the result is trivial.

Consider a graph $G$ on $n \ge 2$ vertices generated as stated above. Suppose that $k$ of the $n$ vertices are good. We then claim that the probability that a cycle exists in the graph is $\frac{k}{n}$. 

If $k = n$, then the graph clearly contains a cycle, so the probability is $1 = \frac{n}{n}$. 
If $k = 0$, the graph clearly does not contain a cycle, so the probability is $0$. 
Now suppose that $0 < k < n$. 
Note that any cycle in the graph must consist only of good vertices.
Therefore, the probability of the existence of a cycle in $G$ is equal to the probability of the existence of a cycle in the subgraph $G'$ consisting of the $k$ good vertices. Each vertex in $G'$ has probability $\frac{k}{n}$ of having an outedge leading to a vertex in $G'$. 
By the inductive hypothesis, $G'$, and therefore $G$, has probability $\frac{k}{n}$ of containing a cycle, as desired.

We have shown that if $k$ of the $n$ vertices are good, the probability of a cycle existing is $\frac{k}{n}.$ Thus, the expected probability of a cycle is $\E \left[\frac{k}{n}\right]$, which is $\frac{\E [k]} {n} = \frac{p\cdot n}{n} = p$ by linearity of expectation. Thus, the probability of creating a cycle is $p$ for $n$ vertices, and the induction is complete. 
\end {proof}

\begin {theorem}
(Cayley's Formula) For any positive integer $n$, the number of trees on $n$ labeled vertices is exactly $n^{n-2}$. 
\end {theorem}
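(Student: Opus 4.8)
The plan is to run the process of Lemma~\ref{keylemma} once more and extract $T_n$, the number of labeled trees on the $n$ vertices, from the polynomial identity the lemma hands us. By the lemma, for every $p \in [0,1]$ the probability that the generated digraph contains \emph{no} directed cycle equals $1-p$. I would expand this probability by conditioning on which vertices turn out to be good: a configuration (a choice of the good set together with the random outedge of each good vertex) in which exactly $g$ vertices are good occurs with probability $p^{g}(1-p)^{n-g}n^{-g}$, the factor $n^{-g}$ recording the uniform outedge choices. Hence, writing $a_g$ for the number of \emph{acyclic} configurations with exactly $g$ good vertices,
\[
  1-p \;=\; \sum_{g=0}^{n} a_g\, p^{g}(1-p)^{n-g} n^{-g}.
\]
Both sides are polynomials in $p$ that agree on $[0,1]$, so this is an identity of polynomials.

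The next step is to substitute $q = 1-p$ and compare the coefficient of $q^{1}$ on the two sides. On the right one gets $\sum_{g} a_g\, (1-q)^{g} q^{n-g} n^{-g}$; a summand with $n-g \ge 2$ contributes nothing to the $q^{1}$-coefficient, the summand $g=n-1$ contributes $a_{n-1}n^{-(n-1)}$, and the summand $g=n$ contributes $a_n n^{-n}(-n)$. The left side has $q^{1}$-coefficient $1$. So everything reduces to computing $a_n$ and $a_{n-1}$.

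Computing $a_n$ is immediate: if every vertex is good, each has outdegree $1$, and following outedges in a finite digraph must eventually revisit a vertex, producing a cycle; thus $a_n=0$ and that term drops. The heart of the matter is the claim $a_{n-1} = n\,T_n$. For this I would fix a vertex $r$ and show that the acyclic configurations in which $r$ is the unique bad vertex are in bijection with the trees on the $n$ vertices: in such a configuration every vertex except $r$ has exactly one outedge and $r$ has none, so a walk along outedges from any vertex cannot cycle and cannot go on forever, hence ends at $r$; therefore every vertex has a directed path to $r$, the underlying undirected graph is connected with $n-1$ edges and so is a tree, and the configuration is exactly that tree with all edges oriented toward $r$. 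Conversely, any tree on the $n$ vertices oriented toward $r$ is acyclic with $r$ its unique sink. Since the bad vertex of a configuration is recovered as its unique sink, summing over the $n$ choices of $r$ is disjoint and gives $a_{n-1}=n\,T_n$.

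Plugging back in, the coefficient comparison yields $1 = a_{n-1}n^{-(n-1)} = n\,T_n\,n^{-(n-1)}$, so $T_n = n^{n-2}$, with the case $n=1$ checked directly. The two points needing care are (i) justifying that the expansion above is a genuine polynomial identity, so that comparing $q^{1}$-coefficients is legitimate, and (ii) the structural bijection between a one-sink acyclic functional graph and a spanning tree oriented toward that sink — this is the step where Cayley's count actually enters, and the one I expect to demand the most precision in the write-up.
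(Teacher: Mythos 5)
Your proposal is correct, and it reaches Cayley's formula by a genuinely different route than the paper. The paper applies Lemma~\ref{keylemma} to an auxiliary graph on $n-1$ vertices with the single well-chosen value $p=\tfrac{n-1}{n}$, which makes all $n^{n-1}$ outcomes (each vertex points to one of the $n-1$ vertices or to nothing) equally likely; the count of acyclic outcomes then falls out immediately as (probability of no cycle)$\times$(size of sample space) $=\tfrac1n\cdot n^{n-1}$, and a bijection appending an $n$th vertex identifies those acyclic outcomes (rooted forests) with trees on $n$ vertices rooted at the $n$th vertex. You instead run the lemma on all $n$ vertices for \emph{every} $p$, record the resulting statement as the polynomial identity $1-p=\sum_g a_g\,p^g(1-p)^{n-g}n^{-g}$, and extract the coefficient of $q^1$ (with $q=1-p$) to isolate $a_{n-1}$, using $a_n=0$ and the bijection between one-sink acyclic functional graphs and spanning trees oriented toward the sink to get $a_{n-1}=nT_n$, hence $T_n=n^{n-2}$. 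The structural heart is the same in both arguments --- acyclic functional graphs with prescribed sinks are exactly trees or forests oriented toward their roots --- but the probabilistic mechanisms differ: the paper trades generality for a uniform sample space and a one-line count, while your version exploits the full strength of the lemma (validity for all $p$) and pays for it with the coefficient bookkeeping; as a small bonus it also yields the intermediate count $a_{n-1}=n^{n-1}$ of rooted labeled trees on $n$ vertices. Your two flagged points of care are handled correctly: the identity holds on all of $[0,1]$, hence is a polynomial identity, and your sink-tree bijection argument (no repeated vertices on an outedge walk, termination at the unique bad vertex, $n-1$ edges plus connectivity forces a tree) is sound, noting that acyclicity in the sense of the lemma also rules out self-loops and $2$-cycles, so the $n-1$ directed edges really do give $n-1$ distinct undirected edges.
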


\begin{proof}
Assume $n \ge 2$. If $n = 1$ the proof is trivial.

Apply Lemma~\ref{keylemma} on a graph of $n-1$ vertices with $p = \frac{n-1}{n}$. Each vertex is equally likely to have a directed edge to a specified vertex or to have no outedge at all, so there are $n^{n-1}$ possible directed graphs, each equally probable. Call one such graph \textit{special} if it does not contain any cycles. Note that special graphs must be forests of rooted trees with edges pointing towards the root in each tree; furthermore, every such forest of rooted trees is a special graph. 


We construct a mapping from special graphs to trees on $n$ vertices rooted at the $n$th vertex. 
For each special graph, construct an additional, $n$th vertex in the graph. For each of the original $n-1$ vertices that does not have an outedge, create a directed edge leading to the $n$th vertex. This gives a tree with all edges directed at the $n$th vertex (i.e. a tree rooted at the $n$th vertex).


Note that this mapping is injective, as the vertices with edges directed toward the $n$th vertex are precisely those without outedges in the original graph, while all edges are preserved. 
On the other hand, the mapping is surjective over trees rooted at the $n$th vertex, because we can just direct all edges toward the root and remove the root and its edges. This must form a forest of rooted trees. 

Thus, a bijection exists between such directed graphs and rooted undirected trees on $n$ vertices. Furthermore, the number of trees rooted at a particular vertex (the $n$th vertex in this case) on $n$ labeled vertices is simply the number of trees on $n$ labeled vertices. Thus, to count the number of labeled trees on $n$ vertices, we need only to count the number of directed graphs defined above.

Since there are $n^{n-1}$ possible graphs, and exactly $1-p=\frac{1}{n}$ of the graphs correspond to labeled trees with $n$ vertices, there are a total of $n^{n-2}$ trees on $n$ vertices, as desired.
\end{proof}

\section {Generalizations}

In addition to the above result, we found two nice generalizations of the lemma. 

\begin {corollary}
Consider a variant of the graph generation process in Lemma~\ref{keylemma}. For each vertex $v$, indepedently perform the following process: Pick a vertex $w$ uniformly at random from the vertices of $G$. With probability $p_v$, call $v$ good and draw a directed edge between $v$ and $w$. The probability of a cycle in the graph is equal to the average of the probabilities of each vertex being good, or $\frac{\sum\limits_v p_v}{n}.$
\end {corollary}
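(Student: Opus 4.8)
The plan is to mirror the proof of Lemma~\ref{keylemma}, the only change being that the indicators of vertices being good are no longer identically distributed. First I would condition on the (random) set $S$ of good vertices, say with $|S| = k$. Exactly as in the proof of the lemma, any directed cycle must be supported entirely on good vertices, so a cycle exists in $G$ if and only if a cycle exists in the induced subgraph $G'$ on the $k$ good vertices.

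Next I would verify that, conditioned on $S$ being the good set, the subgraph $G'$ has precisely the distribution of the random graph from Lemma~\ref{keylemma} on $k$ vertices with parameter $p' = \frac{k}{n}$: each of the $k$ vertices of $G'$ has exactly one outedge, whose endpoint $w$ is uniform over all $n$ original vertices, hence lands in $G'$ with probability $\frac{k}{n}$ and, conditioned on landing in $G'$, is uniform over the $k$ vertices of $G'$; moreover these choices are independent across the good vertices. Applying Lemma~\ref{keylemma} to $G'$ then gives that, conditioned on any good set of size $k$, the probability of a cycle is $\frac{k}{n}$.

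Finally I would remove the conditioning. Writing $k = \sum_v X_v$ where $X_v$ is the indicator that $v$ is good, so that $\E[X_v] = p_v$, the total probability of a cycle is
\[
\E\left[\frac{k}{n}\right] = \frac{\E[k]}{n} = \frac{\sum_v \E[X_v]}{n} = \frac{\sum_v p_v}{n}
\]
by linearity of expectation, as desired.

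The main obstacle is the middle step: one must be careful that conditioning on which vertices are good does not disturb the uniformity and independence of the chosen endpoints. The event ``$v$ is good'' is decided by an independent $p_v$-coin and carries no information about $w$, so the conditional law of the outedges is as claimed. Once this is granted, the result follows immediately from the lemma and linearity of expectation; in particular, the distinct values $p_v$ are irrelevant to the conditional cycle probability, which depends only on the number $k$ of good vertices.
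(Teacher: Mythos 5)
Your proof is correct and follows essentially the same route as the paper: the paper's proof of this corollary simply cites the intermediate fact from the proof of Lemma~\ref{keylemma} that, conditioned on there being $k$ good vertices, the cycle probability is $\frac{k}{n}$, and then applies linearity of expectation exactly as you do. Your more careful verification that, given the good set, the induced process on the good vertices is again the lemma's process with parameter $\frac{k}{n}$ is just an explicit spelling-out of what the paper leaves implicit.
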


\begin{proof}
As shown in the proof of the lemma, the probability of a cycle existing is equal to $\frac{1}{n}$ of the expected number of good vertices in the graph, or $\frac{\sum\limits_v p_v}{n}.$
\end{proof}

\begin {corollary}
Consider another variant of the graph generation in Lemma~\ref{keylemma}. Let $\pi$ be a probability distribution over the vertices. For each vertex $v$, indepedently perform the following process: Pick a vertex $w$ from the vertices of $G$ according to $\pi$. With probability $p$, call that vertex \textit{good} and draw a directed edge between $v$ and $w$. The probability of a cycle in the graph is equal to $p.$ 
\end {corollary}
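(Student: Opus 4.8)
The plan is to run the same strong induction on $n$ used for Lemma~\ref{keylemma}, but now carrying the distribution $\pi$ through the recursion; the extra generality forces us to induct on this corollary itself rather than reduce to the lemma, since the distribution that reappears one level down will not be uniform. The base case $n=1$ is immediate: the single vertex acquires a self-loop with probability exactly $p$. For $n \ge 2$, I would condition on the set $S$ of \emph{good} vertices (those given an outedge), noting that each $v$ lies in $S$ independently with probability $p$, and that this conditioning does not disturb the independent draws of the targets $w_v \sim \pi$ for $v \in S$. Exactly as in Lemma~\ref{keylemma}, every directed cycle lies entirely within $S$ and uses only edges between vertices of $S$, so $G$ has a cycle if and only if the induced subgraph on $S$ does.

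Next I would compute the conditional law of that induced subgraph. For $v \in S$, the edge $v \to w_v$ lands inside $S$ with probability $\pi(S) := \sum_{u \in S}\pi(u)$, and conditioned on that event $w_v$ is distributed on $S$ according to $\pi'(u) := \pi(u)/\pi(S)$; moreover these events are independent over $v \in S$ because the $w_v$ were chosen independently. (If $\pi(S)=0$ then no good vertex can point into $S$, so there is surely no cycle, which agrees with the value $\pi(S)$.) Hence the subgraph on $S$ is an instance of the same process, run on $|S| < n$ vertices with probability parameter $\pi(S)$ and distribution $\pi'$, so by the strong inductive hypothesis it contains a cycle with probability $\pi(S)$. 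The extreme cases $|S| \in \{0,n\}$ are checked directly as in Lemma~\ref{keylemma}: an empty good set yields no cycle and a full one always yields a cycle, matching $\pi(S)\in\{0,1\}$ respectively.

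It then remains to average over $S$. The probability of a cycle equals $\E[\pi(S)]$, and by linearity of expectation
\[
\E[\pi(S)] = \E\!\left[\sum_{u}\pi(u)\,\mathbf{1}[u\in S]\right] = \sum_{u}\pi(u)\Pr[u\in S] = p\sum_{u}\pi(u) = p,
\]
since $\pi$ sums to $1$. I expect the main obstacle to be the middle step: one must verify carefully that restricting to the good vertices really does reproduce the process of the statement — with the rescaled, generally non-uniform distribution $\pi'$ — and that independence across vertices is preserved under the conditioning, so that the inductive hypothesis genuinely applies.
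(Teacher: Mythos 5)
Your proposal is correct and follows essentially the same route as the paper: strong induction on $n$, conditioning on the set of good vertices, applying the inductive hypothesis to the induced subgraph with parameter $\pi(S)$ and the renormalized distribution, and then averaging. The only minor difference is that you compute $\E[\pi(S)]=p$ directly by linearity of expectation, whereas the paper first averages over which $k$ vertices are good to obtain $\frac{k}{n}$ and then takes the expectation over $k$ as in the lemma; your added care about the conditional law $\pi'$ and the preserved independence is a welcome elaboration of a step the paper leaves implicit.
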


\begin{proof}
Proceed with strong induction as in the lemma. We wish to show that the probability of a cycle in the graph is equal to $\frac{1}{n}$ of the expected number of good vertices. The $k=0$ and $k=n$ cases remain the same. For all other values of $k$, note that the probability of each node in $G'$ having its outedge lead to a node in $G'$ is $\sum\limits_{v \in G'} \pi(v)$. However, since each node has equal probability of being good, the expected value of this probability, and therefore the expected value of a cycle in $G'$ by the inductive hypothesis, is $k(\frac{1}{n})(\sum\limits_{v \in G} \pi(v)) = \frac{k}{n}$, as desired.
\end{proof}


\section{Acknowledgements}
We would like to thank Po Shen Loh, Yan Zhang, and others for their guidance. 

\bibliographystyle{plain}
\bibliography{cayley}
\end{document}